\theoremstyle{plain} \numberwithin{equation}{section}
\newcounter{dummy} 
\numberwithin{dummy}{section}
\newtheorem{theorem}[dummy]{Theorem}
\newtheorem{lemma}[dummy]{Lemma}
\newtheorem{cor}[dummy]{Corollary}
\newtheorem{proposition}[dummy]{Proposition}
\newtheorem{fact}[dummy]{Fact}
\newtheorem*{notation*}{Notation}
\newtheorem{definition}[dummy]{Definition}
\newcommand{\Mod}[1]{\ (\mathrm{mod}\ #1)}
\newtheorem*{rep@theorem}{\rep@title}
\newcommand{\newreptheorem}[2]{%
	\newenvironment{rep#1}[1]{%
		\def\rep@title{#2 \ref{##1}}%
		\begin{rep@theorem}}%
		{\end{rep@theorem}}}
\begin{document}
\author{Ian M. Banfield}
\thanks{The author was supported by the SNF project no. 178756.}
\address{Mathematisches Institut, Universit\"at Bern, Siedlerstrasse 5, Bern, CH-3012}
\email{ian.matthew.banfield@gmail.com}
\keywords{Levine-Tristram signature; maximal signature; torus knot; topological 4-genus}
%\ccode{Mathematics Subject Classification 2000: 57K10}
\markboth{Ian M. Banfield}{The maximum Levine-Tristram signature of torus knots}
\title{The Maximum Levine-Tristram Signature of Torus Knots}
\begin{abstract}
	We prove that the maximum of the Levine-Tristram signature function of a torus knot satisfies a reduction formula analogous to Gordon-Litherland-Murasugi's result for the classical signature. Applications include lower bounds for the topological 4-genus of torus knots.
\end{abstract}
\maketitle

\section{Introduction}
\label{sec:introduction}

Let $L \subset S^3$ be a link and let $\sigma_w(L) : \mathbb{S}^1 \to \mathbb{Z}$ be the Levine-Tristram signature function, defined as the algebraic signature of the weighted Seifert matrix
$$\sigma_w(L) = \mbox{sign}\left ( (1−w)A+ (1−\bar{w})A^\intercal \right ),$$
where $A$ is a Seifert matrix for $L$ \cite{MR1472978}. The classical signature of a link $L$ is $\sigma(L) = \sigma_{-1}(L)$, i.e. the signature of the Seifert pairing, represented by the symmetrized Seifert form. The Levine-Tristram signature function has been studied extensively, for details we refer to Conway's survey \cite{MR4294761}. In this work we investigate the peaks of the Levine-Tristram signature function.

\begin{definition}
	Let $L \subset S^3$ be a link and let $\sigma_w(L)$ be its Levine-Tristram signature function. The \textbf{maximum signature} of $L$ is $$\widehat{\sigma}(L) = \max_{w \in S^1} \sigma_w(L).$$
\end{definition}

% argument: knotennullstelle <=> zero of the alex of a knot. only countably many knots, with finite zeroes => knotennullstellen
% cannot be dense.

By work of Nagel-Powell and Cha-Livingstone, it is known that the Levine-Tristram signature $\sigma_w(L)$ at $w \in \mathbb{S}^1$ is a topological concordance invariant if and only if $w$ does not arise as the root of a Laurent polynomial $p(t) \in \mathbb{Z}[t,t^{-1}]$ with $p(1) = \pm 1$ \cite{MR3609203, MR2054808}. A complex number $w \in \mathbb{S}^1$ which is a root of such a Laurent polynomial is called a Knotennullstelle. Since Knotennullstellen are isolated and the Levine-Tristram signature function $\sigma_w(L)$ is piecewise constant with finitely many discontinuities \cite{MR4294761}, it follows that the maximum signature $\widehat{\sigma}(L)$ is also a topological concordance invariant.

A torus knot is a knot that lies on the surface of an unknotted torus in $S^3$. More specifically, let $(p,q)$ be coprime and let the $(p,q)$ torus knot $T(p,q) = \widehat{\beta}$ be the closure of the braid $\beta = (\sigma_{p-1} \sigma_{p-2} ... \sigma_1)^q \in B_p$. A classical result from Gordon-Litherland-Murasugi is the following periodicity and reduction formula for the signature of torus knots:

\begin{fact}[\cite{MR617628}, Theorem 5.2, (I)]
	\label{eq:murasugi_recursive_signature}
	Let $0 < p, q$. The signature $\sigma$ of a torus knot $T(p,q)$ satisfies the following recursive relations. 
	\begin{align}
		\sigma(T(p,q+2p)) = \sigma(T(p,q)) + \begin{cases}
			p^2, & \text{for p even}\\
			p^2-1, & \text{for p odd}
		\end{cases}		
	\end{align}
\end{fact}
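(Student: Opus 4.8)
The plan is to reduce the statement to a lattice-point count and then to exploit the fact that replacing $q$ by $q+2p$ shifts the relevant counting thresholds by integers. I would start from the classical closed form for the signature of a torus knot coming from the Milnor fibre of the suspended singularity $z_0^2 + z_1^p + z_2^q$ (Brieskorn, Hirzebruch--Zagier; equivalently Litherland's formula specialised to $w=-1$). Writing $u = \tfrac{j_1}{p} + \tfrac{j_2}{q}$ and ranging over $1 \le j_1 \le p-1$, $1 \le j_2 \le q-1$, this reads
\begin{align*}
	\sigma(T(p,q)) = \#\{ u < \tfrac12 \} + \#\{ u > \tfrac32 \} - \#\{ \tfrac12 < u < \tfrac32 \}
\end{align*}
in the convention where $\sigma(T(2,3)) = -2$. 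The involution $(j_1,j_2) \mapsto (p-j_1, q-j_2)$ sends $u \mapsto 2-u$ and so identifies the first two counts; setting $A(q) = \#\{u < \tfrac12\}$ and letting $D(q)$ be the number of degenerate pairs with $u \in \{\tfrac12,\tfrac32\}$, the formula becomes $\sigma(T(p,q)) = 4A(q) + D(q) - (p-1)(q-1)$.

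The key step is to compute $A(q+2p) - A(q)$. For fixed $j_1$ the inequality $u < \tfrac12$ reads $j_2 < c_{j_1}(q)$ with $c_{j_1}(q) = \tfrac{(p-2j_1)q}{2p}$, which is vacuous unless $2j_1 < p$. The decisive observation is that
\begin{align*}
	c_{j_1}(q+2p) = c_{j_1}(q) + (p - 2j_1),
\end{align*}
an integer shift, so the fractional part of the threshold is unchanged and the number of admissible $j_2$ grows by exactly $p-2j_1$. Summing over $1 \le j_1 < p/2$ gives $A(q+2p) - A(q) = \sum_{2j_1 < p}(p-2j_1)$, which evaluates to $\tfrac{p^2-2p}{4}$ for even $p$ and to $\tfrac{(p-1)^2}{4}$ for odd $p$.

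Assembling the pieces, $\sigma(T(p,q+2p)) - \sigma(T(p,q)) = 4\big(A(q+2p)-A(q)\big) + \big(D(q+2p)-D(q)\big) - 2p(p-1)$. The first and third terms already produce $-p^2$ for even $p$ and $-(p^2-1)$ for odd $p$, which are exactly the asserted increments up to the global sign convention for torus-knot signatures (the convention of the Fact makes positive torus knots have positive signature, flipping these to $+p^2$ and $+p^2-1$). What remains is the boundary bookkeeping: one must verify that the degenerate count is periodic, $D(q+2p) = D(q)$, and that the integer-shift count for $A$ is unaffected in the edge case where $c_{j_1}(q)$ is itself an integer. Both follow from the same mechanism, since the shift $p-2j_1$ is an integer and hence leaves invariant every divisibility condition defining a degenerate pair. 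I expect this boundary analysis, together with carefully pinning down the sign convention so the increments come out as stated, to be the only delicate part; the engine of the proof is the clean integer shift of the thresholds $c_{j_1}$, which makes the periodicity transparent and explains the even/odd dichotomy as coming from the parity of the summation range $2j_1 < p$.
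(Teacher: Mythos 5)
The paper states this as a Fact imported from Gordon--Litherland--Murasugi and gives no proof of its own, but your argument is correct and runs on exactly the same engine the paper uses for its analogous Proposition \ref{prop:periodicity_for_even_p}: express $\sigma$ via Litherland's lattice-point count and observe that replacing $q$ by $q+2p$ shifts each threshold $c_{j_1}$ by the integer $p-2j_1$ (in the paper's notation, $\lfloor j(q+2p)/(2p)\rfloor = \lfloor jq/(2p)\rfloor + j$). Your two flagged edge cases are in fact vacuous --- coprimality forces $D(q)=0$ and prevents $c_{j_1}(q)$ from ever being an integer --- and your sign-convention caveat is the right one, since the raw Litherland count gives $\sigma(T(2,3))=-2$ while the Fact is stated in the convention with positive increments.
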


If $p$ is even, then Fact \ref{eq:murasugi_recursive_signature} can be slightly sharpened to give a formula for $T(p,q+p)$ rather than $T(p,q+2p)$, compare Proposition \ref{prop:periodicity_for_even_p}. In this work we prove an analogue of the Gordon-Litherland-Murasugi formula for the maximum signature, namely a reduction formula from $\widehat{\sigma}(T(p,q+p))$ to $\widehat{\sigma}(T(p,q))$. Unlike in the case of the classical signature (where the equivalent statement of Proposition \ref{prop:periodicity_for_even_p} for odd $p$ is false), the reduction formula for the maximum signature from $T(p,q+p)$ to $T(p,q)$ holds for both even and odd $p$:

\begin{proposition}
	\label{prop:mainthm}
	Let $0 < p < q$. The maximum signature $\widehat{\sigma}$ of a torus knot $T(p,q)$ satisfies the following recursive relation.
		$$
		\widehat{\sigma}(T(p,q+p)) = \widehat{\sigma}(T(p,q)) + \begin{cases}
			\frac{p^2}{2}, & \text{for p even}\\
			\frac{p^2-1}{2}, & \text{for p odd}
		\end{cases}		
		$$
\end{proposition}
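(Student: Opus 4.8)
The plan is to reduce the statement to an extremal problem for partial sums of a $\pm1$-sequence attached to $T(p,q)$, and then to analyse how that sequence changes under $q\mapsto q+p$. First I would recall the explicit description of the Levine--Tristram signature function of a torus knot (see the survey \cite{MR4294761}, and as presumably recalled in an earlier section): $t\mapsto\sigma_{e^{2\pi i t}}(T(p,q))$ is piecewise constant and jumps only at the Alexander roots $e^{2\pi i k/(pq)}$. Writing each such root as $e^{2\pi i \tau_{a,b}/(pq)}$, where $\tau_{a,b}=(aq+bp)\bmod pq$ for the unique $(a,b)\in\Sigma:=\{1,\dots,p-1\}\times\{1,\dots,q-1\}$, the jump across it equals $+2$ when $\tfrac{a}{p}+\tfrac{b}{q}>1$ and $-2$ when $\tfrac{a}{p}+\tfrac{b}{q}<1$ (in the sign convention making $\widehat{\sigma}\ge 0$; no lattice point lies on the antidiagonal $\tfrac{a}{p}+\tfrac{b}{q}=1$ since $\gcd(p,q)=1$). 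Thus $\tfrac12\sigma_{e^{2\pi i t}}(T(p,q))$ is the running sum, ordered by increasing $\tau_{a,b}$, of the weights $w_{a,b}=\pm1$ ($+1$ on the ``upper'' triangle $\tfrac{a}{p}+\tfrac{b}{q}>1$). Maximising over $t$ yields the combinatorial model
$$\widehat{\sigma}(T(p,q))=2\max_{0\le T\le pq}\Big(\#\{(a,b)\in\Sigma:\tau_{a,b}<T,\ \tfrac{a}{p}+\tfrac{b}{q}>1\}-\#\{(a,b)\in\Sigma:\tau_{a,b}<T,\ \tfrac{a}{p}+\tfrac{b}{q}<1\}\Big),$$
so that $\widehat{\sigma}/2$ is the maximal prefix sum of this $\pm1$-sequence.

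Next I would rewrite the bracketed quantity as a pure lattice-point count: an upper point is counted exactly when $pq<aq+bp<pq+T$ and a lower point exactly when $aq+bp<T$, so the prefix sum at threshold $T$ is $P(T)=\#\{(a,b)\in\Sigma:pq<aq+bp<pq+T\}-\#\{(a,b)\in\Sigma:aq+bp<T\}$. Applying the involution $(a,b)\mapsto(p-a,q-b)$, which sends $aq+bp-pq$ to its negative and exchanges the two triangles, turns the second count into a count of upper points with $\tau_{a,b}>pq-T$; hence $P(T)=G(T)+G(pq-T)-N$, where $N=\tfrac12(p-1)(q-1)$ is the number of upper points and $G(s)=\#\{\text{upper }(a,b):\tau_{a,b}<s\}$. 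Everything is then governed by the distribution of the $\tau$-values of the upper triangle, and $\widehat{\sigma}(T(p,q))=2\big(\max_T[G(T)+G(pq-T)]-N\big)$.

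The recursion is established by comparing this data for $q$ and for $q+p$. Under $q\mapsto q+p$ the box $\Sigma$ gains the new columns $b=q,\dots,q+p-1$, the linear form changes by $aq+bp\mapsto a(q+p)+bp=(aq+bp)+ap$, and the modulus becomes $p(q+p)$. I would construct an order- and weight-compatible matching between the configuration for $q$ and a subconfiguration for $q+p$, locate the new and displaced lattice points, and show that inserting the old optimal threshold into the new $\pm1$-sequence raises the prefix sum by exactly the number of lattice points in an explicit triangular region; this gives the lower bound $\widehat{\sigma}(T(p,q+p))\ge\widehat{\sigma}(T(p,q))+c_p$. A matching upper bound, estimating $\max_T P(T)$ for $q+p$ against that for $q$, then forces equality. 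The even/odd split is forced by integrality of this count: the region carries $\tfrac{p^2}{2}$ lattice points for even $p$ and $\tfrac{p^2-1}{2}$ for odd $p$, the missing unit being the boundary points its hypotenuse would otherwise bisect.

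I expect the main obstacle to be that, unlike the classical signature --- always read off at the fixed point $w=-1$, whence Fact~\ref{eq:murasugi_recursive_signature} --- the maximising argument of $\sigma_w(T(p,q))$ genuinely moves as $q$ grows and need not lie at $w=-1$ (already for $T(3,7)$ the maximum $\widehat{\sigma}=10$ exceeds the central value $\sigma_{-1}=8$ and is attained off-centre). One therefore cannot simply evaluate at a fixed $w$; both a lower bound (an explicit good threshold for $q+p$) and a matching upper bound (no threshold does better) are needed, and the bulk of the work is tracking the optimal threshold and its prefix-maximum through the reindexing $\tau_{a,b}\mapsto\tau_{a,b}+ap\pmod{p(q+p)}$. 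The precise constants $\tfrac{p^2}{2}$ versus $\tfrac{p^2-1}{2}$, and the uniform treatment of both parities of $p$, emerge from the careful lattice-point count in the triangular block, which is where I anticipate the delicate part of the argument to lie.
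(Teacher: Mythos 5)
Your combinatorial model is correct and is essentially the same one the paper works with: Litherland's lattice-point formula, recast as the maximal prefix sum of a $\pm1$-sequence indexed by the jump locations $(aq+bp) \bmod pq$, with sign determined by which triangle of the box the lattice point lies in. The problem is that the proof stops exactly where the theorem begins. The ``order- and weight-compatible matching'' between the configurations for $q$ and $q+p$, the claim that the old optimal threshold gains exactly the lattice points of an explicit triangular region, and above all the ``matching upper bound'' asserting that no threshold for $q+p$ does better, are all announced rather than constructed. The upper bound is the genuine difficulty, as you yourself note: a priori the maximising threshold for $q+p$ could sit anywhere among the $(p-1)(q+p-1)$ jump locations, and a global matching argument would have to control all of them at once.

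The paper's way around this is a localization step that your plan lacks. Lemma \ref{lem:max} shows that the maximum of $\sigma_t(T(p,q))$ is always attained for $t \in \left(\tfrac12 - \tfrac1q, \tfrac12\right]$, because translating the Manhattan annulus by $-\tfrac1q$ along the axis never increases the lattice count once $t \le \tfrac12$. This collapses the global optimisation over all $pq$ thresholds to a local one governed only by the residues $D_j, d_k \bmod 2p$ --- the scaled distances from the extremal lattice point in each column to the antidiagonal --- i.e.\ a balanced $\pm1$-sequence of length $2(\lceil p/2\rceil - 1)$ that does not grow with $q$. Under $q \mapsto q+p$ this finite data is literally unchanged when $p$ is even, and undergoes a cyclic shift by $\tfrac{p-1}{2}$ when $p$ is odd; Lemma \ref{lem:max_partial_sum_balanced_sequence} then converts the shift into an exact formula for the change in the maximal partial sum, which is combined with the corresponding change in the classical signature. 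Without some substitute for this localization, your lower-bound-plus-upper-bound strategy leaves the hardest half of the argument untouched, so as it stands the proposal is a correct reformulation followed by a gap rather than a proof.
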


The motivation for this work comes from the topological $4$-genus of torus knots. For 3-strand torus knots, Baader-B.-Lewark showed that $2 g_4(T(3,n)) = \widehat{\sigma}(T(3,n))$ and we asked whether this equality holds for all torus knots, \cite{MR4171377}. Specifically, the maximum signature plays the role of lower bound as $2 g_4(K) \geq \widehat{\sigma}(K)$ for knots $K \subset S^3$, by work of Powell \cite{MR3604490}.

\section{The Maximum Signature}
\label{sec:lt-signature}

We denote the Levine-Tristam signature at $w = e^{2\pi i t} \in \mathbb{S}^1$ by $\sigma_t(L)$ for $t \in [0,1]$. Our main technical tool for proving Proposition \ref{prop:mainthm} will be Litherland's formula for the Levine-Tristram signature of torus knots in terms of counts of lattice points \cite{MR547456}. Viro \cite{Viro_BranchedCoverings} proved that the Levine-Tristram signatures $\sigma_\frac{k}{m}(L)$ of a link $L \subset S^3$ (for $0 \leq k < m$) can be interpreted in terms of the intersection form on the $m$-fold cylic branched cover of a 4-dimensional manifold $(N, \partial N)$ over a properly embedded, null-homologous surface $(F, \partial F)$. (Here one requires that $N$ satisfies $H_1(N, \mathbb{Z}) = 0$ and that $\partial F = L \subset S^3 = \partial N$). Using Viro's characterization and work by Hirzebruch and Pham on the signatures of the Pham-Brieskorn manifolds, cf. \cite{Brieskorn1966}, Litherland derived the following expression for the Levine-Tristram signature of the $T(p,q)$.

\begin{proposition}[\cite{MR547456}, Proposition 1]
	If K is a (p,q) torus knot and $\xi = e^{2 \pi i t}$, $t$ rational, $0 < t < 1$, then $\sigma_{t}(K) = \sigma_{\xi^+} - \sigma_{\xi^-}$, where 
	\begin{enumerate}
		\item $\sigma_{\xi^+}$ is the number of pairs (i,j) of integers $0 < i < p$, $0 < j < q$, such that $t-1 < \frac{i}{p} + \frac{j}{q} < t \Mod{2}$, and
		\item $\sigma_{\xi^-}$ is the number of pairs (i,j) of integers $0 < i < p$, $0 < j < q$, such that $t < \frac{i}{p} + \frac{j}{q} < t+1 \Mod{2}$.
	\end{enumerate}
	\label{prop:litherland_signature}
\end{proposition}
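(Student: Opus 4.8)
The plan is to translate the statement into an extremal lattice-point count via Litherland's formula and then to prove a single reduction identity for that count. For $0<t<1$ every interior point $(i,j)$ has $f(i,j):=\tfrac ip+\tfrac jq\in(0,2)$, so unfolding the conditions modulo $2$ in Proposition~\ref{prop:litherland_signature} gives $\sigma_{\xi^+}=\#\{(i,j): f(i,j)\in(0,t)\cup(t+1,2)\}$ and $\sigma_{\xi^-}=B_{p,q}(t):=\#\{(i,j): f(i,j)\in(t,t+1)\}$. Writing $N=(p-1)(q-1)$ and using $\sigma_{\xi^+}+\sigma_{\xi^-}=N$ on the generic intervals (no point on a window edge), this yields, in Litherland's convention, $\sigma_t(T(p,q))=\sigma_{\xi^+}-\sigma_{\xi^-}=N-2B_{p,q}(t)$. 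In the sign convention of the present paper, under which positive torus knots have non-negative signature, this reads $\sigma_t(T(p,q))=2B_{p,q}(t)-N$; since the signature function is piecewise constant with jumps only where some $f(i,j)$ equals $t$ or $t+1$, its maximum is attained on a generic interval and
\[ \widehat\sigma(T(p,q))=2\max_t B_{p,q}(t)-(p-1)(q-1). \]

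First I would use this to reduce the Proposition to a clean statement about lattice points. Subtracting the two instances and using $N_{p,q+p}-N_{p,q}=p(p-1)$, the claimed formula is equivalent, after passing to the complementary count $G_{p,q}(t):=N-B_{p,q}(t)$ (the number of interior points \emph{outside} the unit window), to the single reduction identity
\[ \min_t G_{p,q+p}(t)-\min_t G_{p,q}(t)=\Big\lfloor \tfrac{(p-1)^2}{4}\Big\rfloor. \]
A direct check via $p(p-1)-2\lfloor(p-1)^2/4\rfloor=\lfloor p^2/2\rfloor$ shows that the right-hand side reproduces exactly the two cases $\tfrac{p^2}2$ and $\tfrac{p^2-1}2$, unifying the even and odd cases into one floor. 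The symmetry $(i,j)\mapsto(p-i,q-j)$, $f\mapsto 2-f$, shows $G_{p,q}(t)=L_{p,q}(t)+L_{p,q}(1-t)$ with $L_{p,q}(t)=\#\{f<t\}$, so $\min_t G_{p,q}(t)$ is the least number of interior points one must discard so that the rest lie in a common window of width one.

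To prove the reduction identity I would analyse $G_{p,q}(t)$ one row at a time. For fixed $i$ the window condition reads $j\in\big(q(t-\tfrac ip),\,q(t-\tfrac ip)+q\big)$, an open interval of length $q$, so the row-$i$ contribution to $G_{p,q}(t)$ is the number of $j\in\{1,\dots,q-1\}$ lying below or above this interval, a quantity controlled by the fractional part of $q(t-\tfrac ip)=qt-\tfrac{iq}p$. The arithmetic engine is the congruence $i(q+p)\equiv iq \pmod p$: replacing $q$ by $q+p$ leaves the residues $iq \bmod p$, and hence the fractional offsets of all $p-1$ rows, unchanged while lengthening each row-interval by exactly $p$. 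This lets me transport a window minimising $G_{p,q}$ to a window for $q+p$ and compute the additional discarded points row by row; summing the increments over $i=1,\dots,p-1$ produces a Gauss-type sum $\sum_{i=1}^{p-1}\lfloor\cdots\rfloor$ that evaluates to $\lfloor(p-1)^2/4\rfloor$.

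I expect the main obstacle to be the optimisation step: showing that a single window placement can be taken simultaneously (near-)optimal for $q$ and for $q+p$, so that minimising $G$ and the step $q\mapsto q+p$ commute up to the boundary increment just described; equivalently, that the location of the best window drifts predictably under $q\mapsto q+p$ rather than jumping. The examples already show the optimum is generally not the symmetric window $t=\tfrac12$, so this cannot be finessed by fixing $t$. Secondary technical points are the exact evaluation of the row sum, including the even/odd split encoded by the floor, the treatment of lattice points lying on a window edge where $\sigma_t$ jumps, and the use of the hypothesis $p<q$ to ensure the configuration is in the stable range where the per-row count behaves uniformly.
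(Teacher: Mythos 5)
Your proposal does not prove the statement it was assigned. The statement is Litherland's formula itself (Proposition~\ref{prop:litherland_signature}): the identification of the Levine--Tristram signature $\sigma_t(T(p,q))$ with the signed count of pairs $(i,j)$ according to where $\frac{i}{p}+\frac{j}{q}$ falls modulo $2$. Your very first sentence \emph{assumes} this formula (``translate the statement into an extremal lattice-point count via Litherland's formula'') and then outlines a proof of a different result, namely the paper's reduction formula $\widehat{\sigma}(T(p,q+p))=\widehat{\sigma}(T(p,q))+\lfloor p^2/2\rfloor$, i.e.\ Proposition~\ref{prop:mainthm}. As an argument for the stated proposition this is circular: nothing in your text explains why the signature of the symmetrized weighted Seifert form is computed by these lattice-point counts. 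The paper itself offers no proof either --- it cites Litherland \cite{MR547456}, whose argument runs through Viro's interpretation of the signatures $\sigma_{k/m}$ via intersection forms on cyclic branched covers \cite{Viro_BranchedCoverings} together with the Hirzebruch--Pham computation for Brieskorn manifolds \cite{Brieskorn1966}. A self-contained alternative would be to diagonalize $(1-w)A+(1-\bar{w})A^{\intercal}$ for the standard Seifert matrix of $T(p,q)$, or to track the jumps of $\sigma_t$ across the roots $e^{2\pi i(\frac{i}{p}+\frac{j}{q})}$ of the Alexander polynomial; none of these ingredients appears in your proposal.

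Even read charitably as an attempt at Proposition~\ref{prop:mainthm}, the sketch leaves its crux unresolved, and you say so yourself: the ``optimisation step,'' i.e.\ that minimizing over window placements commutes (up to a computable increment) with $q\mapsto q+p$, is flagged as an expected obstacle rather than proved. That step is precisely where the paper's work lies: Lemma~\ref{lem:max} pins the optimum to an interval of width $\frac{1}{q}$ left of $t=\frac{1}{2}$; Lemma~\ref{lem:d_j_are_distinct_integers} shows the distances $D_j, d_k$ are distinct integers modulo $2p$; and the passage $q\mapsto q+p$ is then shown to leave the associated balanced $\pm 1$ sequence unchanged for even $p$ and to shift it cyclically by $\frac{p-1}{2}$ for odd $p$, with Lemma~\ref{lem:max_partial_sum_balanced_sequence} converting the cyclic shift into an exact change of the maximal partial sum --- no knowledge of \emph{where} the optimal window sits is ever needed. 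Your row-by-row fractional-part analysis rediscovers the congruence arithmetic $i(q+p)\equiv iq \pmod{p}$ behind Lemma~\ref{lem:d_j_are_distinct_integers}, but without the balanced-sequence device your Gauss-sum evaluation only computes the increment for a \emph{fixed} window, which, as you note, cannot be assumed optimal for both $q$ and $q+p$. So the proposal is off-target for the assigned statement and incomplete for the statement it actually pursues.
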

The following expression for the Levine-Tristram signatures of torus knots $T(p,q)$ is immediate.
\begin{proposition}
	$\sigma_{t}(K) = 2 \sigma_{\xi^+} - (p-1)(q-1)$.
\end{proposition}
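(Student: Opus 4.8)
The plan is to show that the two lattice-point counts $\sigma_{\xi^+}$ and $\sigma_{\xi^-}$ of Proposition \ref{prop:litherland_signature} are complementary, so that $\sigma_{\xi^+} + \sigma_{\xi^-}$ equals the total number of admissible pairs. Combined with $\sigma_t(K) = \sigma_{\xi^+} - \sigma_{\xi^-}$, this gives the claim after a one-line substitution.

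First I would record that the pairs $(i,j)$ of integers with $0 < i < p$ and $0 < j < q$ number exactly $(p-1)(q-1)$, and that each such pair satisfies $0 < \frac{i}{p} + \frac{j}{q} < 2$. Then I would reduce the two defining conditions modulo $2$ to subsets of the interval $(0,2)$. For $0 < t < 1$ one checks that the condition $t - 1 < \frac{i}{p} + \frac{j}{q} < t \Mod{2}$ of (1) is equivalent to $\frac{i}{p} + \frac{j}{q} \in (0,t) \cup (t+1,2)$, while the condition $t < \frac{i}{p} + \frac{j}{q} < t + 1 \Mod{2}$ of (2) is equivalent to $\frac{i}{p} + \frac{j}{q} \in (t,t+1)$. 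These two subsets are disjoint, and their union is all of $(0,2)$ apart from the two points $t$ and $t+1$.

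The key step is to verify that no admissible pair satisfies $\frac{i}{p} + \frac{j}{q} = t$ or $\frac{i}{p} + \frac{j}{q} = t+1$, so that the two subsets genuinely partition the admissible pairs. This is precisely the requirement that $t$ not be a discontinuity of $\sigma_t$ — equivalently, that $e^{2\pi i t}$ not be a root of the Alexander polynomial of $T(p,q)$ — which is the regime in which Litherland's count is applied. Granting this, each admissible pair is counted by exactly one of $\sigma_{\xi^+}, \sigma_{\xi^-}$, so that $\sigma_{\xi^+} + \sigma_{\xi^-} = (p-1)(q-1)$.

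Substituting $\sigma_{\xi^-} = (p-1)(q-1) - \sigma_{\xi^+}$ into $\sigma_t(K) = \sigma_{\xi^+} - \sigma_{\xi^-}$ then yields $\sigma_t(K) = 2\sigma_{\xi^+} - (p-1)(q-1)$. I expect the boundary verification to be the only genuine obstacle; the remainder is bookkeeping with the two intervals.
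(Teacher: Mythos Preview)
Your argument is correct and is exactly the computation the paper has in mind when it declares the formula ``immediate'': the conditions defining $\sigma_{\xi^+}$ and $\sigma_{\xi^-}$ partition the $(p-1)(q-1)$ lattice points (away from the finitely many jump values of $t$), so $\sigma_{\xi^+}+\sigma_{\xi^-}=(p-1)(q-1)$ and the identity follows by substitution. Your caveat about boundary values is appropriate, and since only non-jump values of $t$ are needed for the maximum-signature application, nothing further is required.
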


\begin{figure}
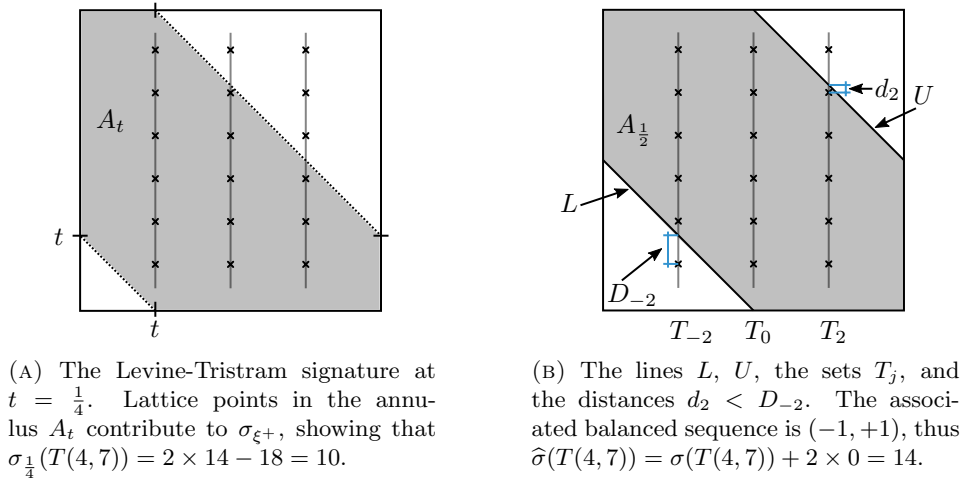

	\begin{subfigure}[t]{.45\textwidth}
		\centering
		\includesvg{lattice_example}
		\caption{The Levine-Tristram signature at $t = \frac{1}{4}$. Lattice points in the annulus $A_t$ contribute to $\sigma_{\xi^+}$, showing that $\sigma_{\frac{1}{4}}(T(4,7)) = 2 \times 14 - 18 = 10$. }
		\label{fig:lattice_example}
	\end{subfigure}\hfill
	\begin{subfigure}[t]{.45\textwidth}
		\centering
		\includesvg{lattice_terminology}
		\caption{The lines $L$, $U$, the sets $T_j$, and the distances $d_{2} < D_{-2}$. The associated balanced sequence is $(-1, +1)$, thus $\widehat{\sigma}(T(4,7)) = \sigma(T(4,7)) + 2\times 0 = 14$.}
		\label{fig:lattice_terminology}
	\end{subfigure}
	\caption{The lattice $\Sigma$ for the torus knot $T(4,7)$.}
\end{figure}

An example of Proposition \ref{prop:litherland_signature} to calculate the Levine-Tristram signature $\sigma_{\frac{1}{4}}(T(4,7))$ in terms of lattice point counts is given in Figure \ref{fig:lattice_example}.

Consider the rectangular lattice
\[
	\Sigma = \left\{\left (\frac{i}{p}, \frac{j}{q}\right) \in \mathbb{Q} \times \mathbb{Q} \right \} \cap (0,1) \times (0,1) \subset \mathbb{R}^2.
\]

A lattice point $a = (x,y) \in \Sigma$ contributes to $\sigma_{\xi^+}$ if and only if its Manhattan norm $d(a) = |x| + |y|$ satisfies $t < d(a) < t+1$, i.e. $a$ lies in the open Manhattan-norm annulus $A_t$, as is indicated in Figure \ref{fig:lattice_example}. Let $\Sigma^+ = \Sigma \cap A_t$ be the lattice points contributing to $\sigma_{\xi^+}$ ($\Sigma^-$ the lattice points contributing to $\sigma_{\xi^-}$, respectively). 

The lattice $\Sigma \subset I \times I$ is symmetric about the lines $x = \frac{1}{2}$ and $y = \frac{1}{2}$, and therefore symmetric under the reflection along the diagonals of $I \times I$. This implies 
\begin{equation}
	\label{eq:symmetry_of_LT_sig}
	\sigma_{\frac{1}{2} - \delta}(T(p,q)) =  \sigma_{\frac{1}{2} + \delta}(T(p,q)).
\end{equation}
An immediate consequence of the symmetry is that there exists $t \in [0,\frac{1}{2}]$ maximizing the Levine-Tristram signature.

For the remainder of this section, we introduce a coordinate system $(x,y)$ with origin at $\left (\frac{1}{2},0 \right ) \in \mathbb{R}^2$. Let  $$T_j = \left \{(x,y) \in \Sigma~|~x = \frac{j}{2p} \right \}$$
for $j$ satisfying $-p < j < p$ and $j \equiv p \Mod{2}$ be the lattice points grouped according to their $x$-coordinates. Clearly $\cup_{j}~T_j = \Sigma$.

\begin{notation*}
	For $x \in \mathbb{R}$, let $\lfloor x \rfloor$ be the greatest integer less than or equal to $x$, $\lceil x \rceil$ be the smallest integer greater than or equal to $x$, and let $\{x\} = x - \lfloor x \rfloor$ be the fractional part of $x$.
\end{notation*}

\begin{lemma}
	Let $0 < p < q$ and $(p,q) = 1$. Then there exists $t \in \left (\frac{1}{2} - \frac{1}{q}, \frac{1}{2} \right ]$ such that $\sigma_t(T(p,q))) = \widehat{\sigma}(T(p,q))$, i.e. the Levine-Tristram signature function attains a global maximum at $t$.
	\label{lem:max}
\end{lemma}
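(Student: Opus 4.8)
The plan is to reduce the maximization of $\sigma_t$ to a lattice‑point count and then prove a one‑sided monotonicity of that count in steps of size $\tfrac1q$. By the Proposition preceding the lemma, $\sigma_t(T(p,q))=2\,\sigma_{\xi^+}-(p-1)(q-1)$, so maximizing $\sigma_t$ is the same as maximizing $f(t):=\sigma_{\xi^+}=\#\{a\in\Sigma : t<d(a)<t+1\}$, the number of lattice points in the Manhattan annulus $A_t$. This $f$ is piecewise constant and right‑continuous with finitely many jumps, so its maximum $M$ is attained, and by the symmetry relation \eqref{eq:symmetry_of_LT_sig} it is symmetric about $t=\tfrac12$; hence $M$ is already attained on $[0,\tfrac12]$. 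It therefore suffices to show that $M$ is attained somewhere on $\left(\tfrac12-\tfrac1q,\tfrac12\right]$.

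The core step is the estimate $f\!\left(t+\tfrac1q\right)\ge f(t)$ for every generic $t\in\left(0,\tfrac12-\tfrac1q\right]$. To prove it I group the lattice points of $\Sigma$ into the columns $T_i$ of fixed $x$‑coordinate $i/p$ (for $1\le i\le p-1$) and compare the two annuli. Within one column the $d$‑values $i/p+j/q$ are spaced exactly $\tfrac1q$ apart, so a band of length $\tfrac1q$ meets each column in at most one point; a short $\lceil\cdot\rceil$‑computation shows that, for generic $c$, the column $x=i/p$ meets $(c,c+\tfrac1q)$ exactly when $\lceil q(c-i/p)\rceil\in\{1,\dots,q-1\}$, and one checks that the constraint $j\le q-1$ is never binding in the relevant range. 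Subtracting the two counts, only the points entering the top of the annulus near $d=t+1$ and those leaving the bottom near $d=t$ matter, giving
$$
f\!\left(t+\tfrac1q\right)-f(t)=U-L,\qquad U=\#\{\,i:\ i/p>t+\tfrac1q\,\},\quad L=\#\{\,i:\ i/p<t\,\},
$$
where both counts run over $i\in\{1,\dots,p-1\}$.

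Now I obtain $U\ge L$ from the reflection $i\mapsto p-i$ of the columns about $x=\tfrac12$, which is an involution of $\{1,\dots,p-1\}$. If $i\in L$, i.e.\ $i/p<t$, then $1-i/p>1-t$, and since $t\le\tfrac12-\tfrac1q$ we have $1-t\ge\tfrac12+\tfrac1q>t+\tfrac1q$; hence $(p-i)/p=1-i/p>t+\tfrac1q$, so $p-i\in U$. Thus the reflection embeds $L$ into $U$, proving $U\ge L$ and with it the estimate. Finally I iterate: assuming for contradiction that $M$ is not attained on $\left(\tfrac12-\tfrac1q,\tfrac12\right]$, right‑continuity lets me choose a generic $t_0\in\left(0,\tfrac12-\tfrac1q\right]$ with $f(t_0)=M$ and all $t_0+j/q$ generic. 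Taking $k$ minimal with $t_0+k/q>\tfrac12-\tfrac1q$, every intermediate argument $t_0+j/q$ $(0\le j\le k-1)$ lies in $\left(0,\tfrac12-\tfrac1q\right]$, so the estimate yields $M=f(t_0)\le f(t_0+\tfrac1q)\le\cdots\le f(t_0+k/q)$ with $t_0+k/q\in\left(\tfrac12-\tfrac1q,\tfrac12\right]$, contradicting the assumption.

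I expect the main obstacle to be the bookkeeping in the column computation—pinning down precisely which columns contribute to each band and verifying that the range restriction $1\le j\le q-1$ never truncates the relevant columns—together with the genericity and endpoint details (the case $t=\tfrac12-\tfrac1q$ and the choice of the generic chain $t_0+j/q$). The reflection inequality $U\ge L$ itself is short, so the real work is in justifying the identity $f(t+\tfrac1q)-f(t)=U-L$.
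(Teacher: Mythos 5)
Your proposal is correct and follows essentially the same route as the paper: both arguments shift the Manhattan annulus in steps of $\tfrac{1}{q}$, compare column-by-column which of the $p-1$ columns gain or lose a single lattice point, deduce monotonicity of the signature toward $t=\tfrac12$ on $\left(0,\tfrac12\right]$, and conclude via the symmetry \eqref{eq:symmetry_of_LT_sig}. Your version is somewhat more explicit than the paper's (the $\lceil\cdot\rceil$ bookkeeping and the reflection $i\mapsto p-i$ replace the paper's asserted count $\lfloor tp\rfloor$), but the underlying idea is identical.
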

\begin{proof}
	From Figure \ref{fig:lattice_terminology}, we note that translating the annulus $A_t$ along the $x$-axis by $-\frac{1}{q}$ increases $\sigma_{\xi^+}$ by one for each $T_j$ with $j < 0$ and decreases $\sigma_{\xi^+}$ by one for each $T_k$ with $k \geq 0$. If $t \leq \frac{1}{2}$, then the number of sets $T_j$ with $j < 0$ is given by $\lfloor t  p \rfloor$, and therefore
	\[
		\sigma_{t-\frac{1}{q}}(T(p,q)) = \sigma_t(T(p,q)) + 2(\lfloor t  p \rfloor -  (p-\lfloor t  p \rfloor)) \leq \sigma_{t}(T(p,q)).
	\]
	The claim now follows from Equation \ref{eq:symmetry_of_LT_sig}.
\end{proof}

Let $L$ be the line $y = -x$ in our coordinate system ($U$ be the line $y = -x + 1$, respectively). Restricted to the unit rectangle $I \times I \supset \Sigma$ the line segments $L \cup U$ form the boundary of the annulus $A_{\frac{1}{2}}$. For $-p < j < 0$ and $0< k < p$ and $j \equiv k \equiv p \Mod{2}$, we define the following non-negative numbers (cf. Figure \ref{fig:lattice_terminology}): 

\begin{align}
	D_j &= 2pq \times \mbox{ minimal vertical distance from } (T_j \cap \Sigma^-) \mbox{ to } L, \\
	d_k &= 2pq \times \mbox{ minimal vertical distance from } (T_k \cap \Sigma^+) \mbox{ to } U.
	\label{eq:distances}
\end{align}
Up to scaling $D_j$ is the vertical distance from $T_j \cap \Sigma^-$ to the annulus $A_\frac{1}{2}$.

\begin{lemma}
	\label{lem:d_j_are_distinct_integers}
	The numbers $D_j, d_k$ are distinct integers and not equal to $p$.
\end{lemma}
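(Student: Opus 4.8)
The plan is to convert the geometric definitions of $D_j$ and $d_k$ into explicit residues modulo $2p$ and then reduce the statement to an elementary counting argument about multiplication by $q$ on $\mathbb{Z}/2p\mathbb{Z}$.

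First I would make the distances explicit. A point of $T_j$ sits at height $Y = \ell/q$ for some $1 \le \ell \le q-1$, while the line $L$ meets the column $x = j/(2p)$ at height $-j/(2p)$. Measuring the vertical distance of the point of $T_j \cap \Sigma^-$ closest to $L$ and scaling by $2pq$ collapses the fractions to integers: the minimising $\ell$ is $\lfloor q|j|/(2p)\rfloor$, so that $D_j = q|j| - 2p\lfloor q|j|/(2p)\rfloor$, i.e. $D_j$ is the residue of $q|j|$ modulo $2p$ lying in $(0,2p)$. The identical computation against $U\colon Y = 1 - x$ gives $d_k \equiv -qk \pmod{2p}$ with $0 < d_k < 2p$. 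This already settles integrality, and it reduces the assertion ``$D_j, d_k$ distinct and $\ne p$'' to a congruence problem. A preliminary point to check here is that no interior lattice point lies exactly on $L$ or $U$, equivalently $D_j, d_k \ne 0$; this follows from the same coprimality bookkeeping used below.

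Next I would unify the two families. Writing $d_k \equiv q(2p-k) \pmod{2p}$ and noting that, as $k$ ranges over $S = \{m : 0 < m < p,\ m \equiv p \pmod 2\}$, the value $2p-k$ ranges over the integers in $(p,2p)$ of the parity of $p$, the combined multiset of values becomes $\{\, qm \bmod 2p : m \in M \,\}$, where $M$ is the set of all integers in $(0,2p)$ of the parity of $p$ with $p$ itself removed. Distinctness of the $D_j$ and $d_k$ is thus equivalent to injectivity of $m \mapsto qm$ on $M$, and avoidance of the value $p$ to $p \notin qM \bmod 2p$. The number-theoretic core then splits on the parity of $q$. If $q$ is odd then $\gcd(q,2p)=1$, so multiplication by $q$ permutes $\mathbb{Z}/2p\mathbb{Z}$; injectivity on $M$ is immediate, and $qm \equiv p$ forces $m \equiv q^{-1}p \equiv p \pmod{2p}$ (since $q^{-1}$ is odd), which is excluded from $M$. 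If $q$ is even then $p$ is necessarily odd; writing $q = 2q'$ with $\gcd(q',p)=1$, every value $qm \bmod 2p$ equals $2\,(q'm \bmod p)$, and a short check shows that reducing $M$ modulo $p$ yields each nonzero residue exactly once. Hence the values are precisely $\{2,4,\dots,2p-2\}$, which are distinct and, being even, never equal the odd number $p$.

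The main obstacle is the parity bookkeeping concentrated in the even-$q$ case: there multiplication by $q$ is two-to-one on $\mathbb{Z}/2p\mathbb{Z}$, so distinctness cannot be read off from a bijection and instead hinges on the precise claim that $M$ reduces modulo $p$ to a complete set of nonzero residues. Verifying that, together with the initial translation of the geometric minimal-distance definitions into the clean residues $q|j| \bmod 2p$ and $-qk \bmod 2p$, is where the real content lies; the remaining steps are routine.
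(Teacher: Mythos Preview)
Your argument is correct and rests on the same key reduction as the paper's: both compute that $D_j$ is the residue of $-jq$ modulo $2p$ lying in $(0,2p)$ (and likewise for $d_k$), after which everything becomes elementary number theory. The organisation, however, differs. The paper keeps the two families separate, invokes the symmetry $D_j + d_{-j} = 2p$ to pass from one to the other, and splits on the parity of $p$: for $p$ even one has $\gcd(2p,q)=1$ outright, while for $p$ odd one argues $p \mid (j-j')$ and $2 \mid (j-j')$ separately; the cross-cases $D_j = d_k$ are then dismissed as ``similar.'' You instead merge both families into a single set $\{\,qm \bmod 2p : m \in M\,\}$ and split on the parity of $q$. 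Your unification has the advantage of handling all distinctness claims (including the cross-cases) in one stroke, and the odd-$q$ branch is especially clean since $q$ is then a unit modulo $2p$. The cost is the even-$q$ branch, where multiplication by $q$ is genuinely two-to-one on $\mathbb{Z}/2p\mathbb{Z}$ and you must check that $M$ reduces modulo $p$ to a full set of nonzero residues; this is straightforward (the odd numbers in $(0,p)$ and those in $(p,2p)$ reduce to the odd and even residues respectively) but is a bit more bookkeeping than the paper's parallel case.
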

\begin{proof}
	We start by proving that the numbers $D_j$ are integers. Let 
	$$(x,y) = \left (\frac{j}{2p}, \frac{k}{q} \right ) \in T_j \cap \Sigma^{-}$$
	be the lattice point in $T_j \cap \Sigma^{-} $ closest to $L$. The point $(x,y)$ realizes the minimum distance $D_j $ precisely when $k \in \mathbb{Z}_{\geq 0}$ is maximised. By definition of $D_j$,
	$$D_j = 2pq~(L(x) - y) = 2p \left (q L\left ( \frac{j}{2p} \right) - k \right ) = 2p~\left (\left \lfloor \frac{-jq}{2p} \right \rfloor + \left \{ \frac{-jq}{2p} \right \} - k\right ),$$
	and therefore $$D_j = 2p \left \{ \frac{-jq}{2p} \right \}.$$
	Thus $D_j$ is the unique integer $0 \leq D_j < 2p$ satisfying $D_j \equiv -jq \Mod{2p}$. We remark that
	$$k = \left \lfloor \frac{-jq}{2p} \right \rfloor$$ is the number of points in $T_j \cap \Sigma^-$ for $t = \frac{1}{2}$, this fact will be used in Lemma $\ref{lem:signature_as_lower_left_count}$.
	The numbers $d_k$ are integers as $D_j + d_{-j} = 2p$ by symmetry.
	We now turn our attention to uniqueness. Consider
	$$D_j \equiv D_{j'} \Mod{2p}~\Longleftrightarrow ~-jq \equiv -j'q \Mod{2p}~\Longleftrightarrow~2p|(-j + j')q,$$
	where $-p < j,j' < p$ and $j \equiv j' \equiv p \Mod{2}$.
	We claim that $2p|(-j + j')$. As $(p,q) = 1$, for $p$ even this follows from $(2p,q) = 1$, and for $p$ odd (and hence coprime to $2$) it follows from $p|(-j + j')$ and $2|(\pm j \pm j')$. The condition $-2p < \pm j \pm j' < 2p$ then implies that $j=j'$. We omit the other cases as they are similar. Lastly, if $D_j = p$ then $-jq = kp$, and so some prime of $p$ divides $q$ which contradicts coprimality.
\end{proof}
\newpage

Lemma \ref{lem:d_j_are_distinct_integers} allows one to associate a balanced sequence $(a_n) \subset \{ \pm 1 \}^{2m}$ of plus one and minus one to the numbers $\{D_j\}\cup \{d_k\}$, where $m = \lceil \frac{p}{2} \rceil - 1$, as follows: 
\begin{enumerate}
	\item Arrange the numbers $\{D_j\}\cup \{d_k\}$ in increasing order,
	\item Replace $D_j$ by $+1$ and $d_k$ by $-1$.
\end{enumerate}
For example, for $T(5,12)$ the values are as follows: $D_{-1} = 2$, $D_{-3} = 6, d_{1} = 8, d_{3} = 4$. Arranging these numbers in increasing order gives $D_{-1} < d_{3} < D_{-3} < d_{1}$, and the associated balanced sequence is $(+ 1, -1, + 1, -1) \in \{ \pm 1 \}^{4}$. 

The combinatorics of balanced sequences of $\pm 1$ was studied by Erdos-Kaplansky \cite{erdos1946sequences}. We need the following definition and Lemma on balanced sequences.

\begin{lemma}
	Let $(a_n) \subset \{\pm 1\}^{2m}$ be a balanced sequence of $\pm 1$, i.e. the number of $+1$ equals the number of $-1$. Consider the maximum of cyclical partial sums starting at $k$, i.e.
	\[
		M(k) = \max_{0 \leq l < \infty} \sum_{i=k}^{l} a_{i \Mod{2m}}.
	\]
	Then $M(k) = a_k + M(k+1)$.
	\label{lem:max_partial_sum_balanced_sequence}
\end{lemma}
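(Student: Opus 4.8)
The plan is to reduce the claimed recursion to a single positivity statement. Write $S(k,l) = \sum_{i=k}^{l} a_{i \Mod{2m}}$ for the cyclic partial sum, with the convention that an empty sum (when $l < k$) equals $0$. I would first record the telescoping identity $S(k,l) = a_k + S(k+1,l)$, valid for every $l \geq k$, and then take the maximum over $l \geq k$. Factoring out the constant first term $a_k$ gives
\[
	M(k) = a_k + \max\Bigl(0,\ \max_{l \geq k+1} S(k+1,l)\Bigr) = a_k + \max\bigl(0,\, M(k+1)\bigr),
\]
where the inner $0$ is contributed by the degenerate choice $l = k$. Thus the entire content of the lemma is that the term $\max(0, M(k+1))$ collapses to $M(k+1)$, i.e. that $M(k+1) \geq 0$.

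The crux is therefore to establish $M(k) \geq 0$ for every $k$, and this is exactly where the balanced hypothesis enters. Since the counts of $+1$ and $-1$ among $a_0, \dots, a_{2m-1}$ agree, any block of $2m$ consecutive cyclically-indexed terms sums to zero; in particular $S(k, k+2m-1) = \sum_{i=k}^{k+2m-1} a_{i \Mod{2m}} = 0$. As this value is itself one of the partial sums competing in the maximum defining $M(k)$, I conclude $M(k) \geq 0$ at once.

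Combining the two steps completes the argument: $M(k+1) \geq 0$ turns $\max(0, M(k+1))$ into $M(k+1)$, yielding $M(k) = a_k + M(k+1)$, where all indices are read modulo $2m$ and $M$ is extended periodically. I expect the only delicate points to be the bookkeeping of the degenerate $l = k$ partial sum and the cyclic wrap-around of indices; the genuinely essential input is the vanishing of a full-period sum, which is precisely the balanced condition and the one hypothesis whose removal would leave only the weaker recursion $M(k) = a_k + \max(0, M(k+1))$.
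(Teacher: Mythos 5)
Your proof is correct, and it is more complete than the one in the paper. Both arguments rest on the same telescoping identity $S(k,l)=a_k+S(k+1,l)$ followed by pulling the constant $a_k$ out of the maximum; but the paper's proof stops there, silently identifying the maximum over sums starting at $k$ with $a_k$ plus the maximum over sums starting at $k+1$, which conflates the single-term sum $S(k,k)=a_k$ with an empty sum on the $k+1$ side. You isolate exactly this degenerate term, reduce the lemma to the inequality $M(k+1)\geq 0$, and prove that inequality from the full-period sum $S(k+1,k+2m)=0$ --- which is precisely where the balanced hypothesis enters. This is a genuine gain: the paper's displayed computation never invokes balancedness, even though the statement is false without it (for the constant sequence $a_i=-1$ one has $M(k)=-1$ for all $k$, so $M(k)\neq a_k+M(k+1)$), so your version supplies the justification the paper's proof omits. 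The only point to tidy is the reading of the index range $0\leq l<\infty$ in the definition of $M(k)$: you should state explicitly whether sums with $l<k$ (empty, hence $0$) compete in the maximum, since under that convention the identity becomes $M(k)=\max\bigl(0,\,a_k+M(k+1)\bigr)$ and one needs the slightly stronger observation that $a_k=-1$ forces $M(k+1)\geq 1$ (again via the full-period sum); your argument adapts to cover this with one more line.
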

\begin{proof}
\begin{align*}
		M(k+1)	&= \max_{0 < l < \infty} \left ( \sum_{i=k+1}^{l} a_{i \Mod{2m}} \right ) + a_k - a_k \\
				&= \max_{0 < l < \infty} \left ( \sum_{i=k}^{l} a_{i \Mod{2m}} \right ) - a_k \\
				&= M(k) - a_k.
\end{align*}
\end{proof}
Let $M(T(p,q))$ be the maximal cyclic partial sum $M(0)$ for the balanced sequence associated to the numbers $\{D_j\} \cup \{d_k \}$ for $T(p,q)$.

\begin{lemma}
	Let $0 < p < q$ and let $T(p,q)$ be a torus knot. The maximal signature is $\widehat{\sigma}(T(p,q)) = \sigma(T(p,q)) + 2 M(T(p,q))$.
	\label{lem:max_signature_based_on_sequence}
\end{lemma}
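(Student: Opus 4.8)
The plan is to start from the base point $t=\tfrac12$, where $\sigma_{1/2}(T(p,q))=\sigma(T(p,q))$, and to track how $\sigma_t$ evolves as $t$ decreases across the interval $\left(\tfrac12-\tfrac1q,\tfrac12\right]$, on which the global maximum is attained by Lemma~\ref{lem:max}. As $t$ decreases, $\sigma_t=2\sigma_{\xi^+}-(p-1)(q-1)$ increases by $2$ each time a lattice point crosses the inner line $L$ into the annulus $A_t$ and decreases by $2$ each time a point crosses the outer line $U$ out of it. The crucial observation is that the interval has width exactly $\tfrac1q$, which equals the vertical spacing of $\Sigma$, so each column contributes at most one crossing on the interval: a left column $T_j$ ($j<0$) contributes an inner crossing at the parameter with $2pq(\tfrac12-t)=D_j$, and a right column $T_k$ ($k>0$) an outer crossing at $2pq(\tfrac12-t)=d_k$. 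All second crossings within a column, together with every crossing coming from the middle column when $p$ is even, occur only at or beyond $t=\tfrac12-\tfrac1q$ and may be ignored.

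First I would make this precise by computing, for each column, the scaled vertical distance from its relevant extremal lattice point to $L$ or $U$, recovering the quantities $D_j,d_k$ of Equation~\ref{eq:distances}. Ordering the crossings by decreasing $t$ is then literally ordering $\{D_j\}\cup\{d_k\}$ increasingly, and recording an inner crossing as $+1$ and an outer crossing as $-1$ reproduces exactly the balanced sequence $(a_n)$. Writing $P_l$ for its prefix sums, one reads off $\sigma_t=\sigma(T(p,q))+2P_l$ between consecutive crossings, whence $\widehat\sigma(T(p,q))=\sigma(T(p,q))+2\max_l P_l=\sigma(T(p,q))+2M(T(p,q))$ --- provided every $D_j$ genuinely records a lattice point crossing $L$.

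The main obstacle is precisely that provided-clause: a value $D_j$ need not correspond to an \emph{actual} crossing. When $T_j\cap\Sigma^-=\varnothing$, the number $D_j$ measures the distance to the virtual boundary point at $y=0$ rather than to a lattice point, so it inflates the combinatorial prefix sum without changing $\sigma_t$. By the count $k_j=\left\lfloor\frac{-jq}{2p}\right\rfloor$ recorded in Lemma~\ref{lem:d_j_are_distinct_integers}, this degeneracy occurs exactly when $p$ is odd and $q<2p$, and then only for $j=-1$, where $D_{-1}=q$; thus there is at most one phantom $+1$. To show it cannot lower the maximum I would compare the honest step count $g^{\mathrm{real}}(s)=\#\{j<0:k_j\ge1,\ D_j<s\}-\#\{d_k<s\}$, for which the tracking yields $\widehat\sigma=\sigma+2\max_s g^{\mathrm{real}}(s)$, with the purely combinatorial count $g(s)=\#\{D_j<s\}-\#\{d_k<s\}$ satisfying $\max_s g(s)=M(T(p,q))$; the two differ only by the indicator of $s>q$. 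The reflection symmetry of $\Sigma$ about the centre of $I\times I$, which underlies Equation~\ref{eq:symmetry_of_LT_sig}, translates via $d_{-j}=2p-D_j$ into $g(s)=g(2p-s)$. Since $2p-q<q$, the phantom-affected range $s>q$ reflects into the range $s<q$ on which $g^{\mathrm{real}}=g$, with value one smaller, forcing the maximum of $g^{\mathrm{real}}$ to be attained for $s\le q$ and hence to equal $M(T(p,q))$. This symmetric bookkeeping, rather than the raw crossing count, is the crux.
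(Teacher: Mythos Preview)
Your argument follows the same route as the paper's: start at $t=\tfrac12$, track the annulus $A_{1/2-s}$ as $s$ runs over $[0,\tfrac1q)$, record each column's unique boundary crossing as $+1$ (entry through $L$, left columns) or $-1$ (exit through $U$, right columns), and identify $\tfrac12(\sigma_{1/2-s}-\sigma)$ with the running prefix sum of the resulting $\pm1$ sequence. The paper's proof stops at exactly this point and declares the maximum to be the maximal partial sum of the balanced sequence associated to $\{D_j\}\cup\{d_k\}$.

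The substantive addition in your proposal is the phantom-crossing analysis, which the paper omits. You are right that when $p$ is odd and $p<q<2p$ one has $\big\lfloor q/(2p)\big\rfloor=0$, so $T_{-1}\cap\Sigma^-=\varnothing$ and the value $D_{-1}=q$ produced by the formula in Lemma~\ref{lem:d_j_are_distinct_integers} does not correspond to any actual lattice point entering the annulus; the paper's last sentence tacitly assumes every $D_j$ is realised by a crossing. Your repair via the symmetry $g(s)=g(2p-s)$ (which is immediate from $d_{-j}=2p-D_j$) is correct: since the lone phantom sits at $D_{-1}=q>p$, the combinatorial maximum of $g$ is already attained on $[0,p]\subset[0,q]$, where $g^{\mathrm{real}}=g$, and hence $\max g^{\mathrm{real}}=\max g=M(T(p,q))$. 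So your proof is not only aligned with the paper's approach but also closes a small gap in it.
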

\begin{proof}
	Lemma \ref{lem:max} shows that the Levine-Tristram signature function has a global maximum in the interval $t \in \left (\frac{1}{2} - \frac{1}{q}, \frac{1}{2} \right]$. Consider the function
	$$f(s) = \frac{1}{2}(\sigma_{\frac{1}{2}-s}(T(p,q)) - \sigma(T(p,q))) \mbox{ for } s \in \left [0,\frac{1}{q} \right ).$$
	The function $f(s)$ counts, with sign, the difference between the number of lattice points that lie in the annuli $A_{\frac{1}{2}}$ and those that lie in $A_{\frac{1}{2}-s}$. This may be visualized as the number of lattice points ``gained" or ``lost" by the annulus on a translation along the $y$-axis by a distance of $-s$, starting at the annulus corresponding to the classical signature. The lattice points contributing with positive sign to $f(s)$ are precisely those in $T_j \cap \Sigma^{-}$ with distance $D_j < 2pqs$ to $L$ and the lattice points contributing with negative sign to $f(s)$ are those in $T_k \cap \Sigma^{+}$ with distance $d_k < 2pqs$ to $U$. All other lattice points either lie in all, or none, of the annuli $A_{\frac{1}{2}-s}$ and thus do not contribute. The order in which the lattice points are ``gained'' or ``lost'' is thus determined by the ordering of the numbers $D_j$ and $d_k$. This shows that the maximum of $f(s)$ on its domain is the maximal cyclical partial sum of the balanced sequence associated to the numbers $\{D_j\} \cup \{ d_k \}$.
\end{proof}
For example, we previously calculated that the associated balanced sequence for $T(5,12)$ is $(+ 1, -1, + 1, -1)$, so $M(T(5,12)) = 1$. Since the signature of $T(5,12)$ is $\sigma(T(5,12)) = 28$, Lemma \ref{lem:max_signature_based_on_sequence} shows that $\widehat{\sigma}(T(5,12)) = 28 + 2 = 30$.
\begin{cor}
	Let $0 < p < q$ and let $T(p,q)$ be a torus knot. The maximal signature satisfies $\widehat{\sigma}(T(p,q)) \leq \sigma(T(p,q)) + p - 1$.
\end{cor}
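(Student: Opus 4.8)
The plan is to reduce everything to a bound on the maximal cyclic partial sum. By Lemma \ref{lem:max_signature_based_on_sequence} we have $\widehat{\sigma}(T(p,q)) = \sigma(T(p,q)) + 2 M(T(p,q))$, so the corollary is equivalent to the inequality $M(T(p,q)) \leq \frac{p-1}{2}$, and I would spend the proof establishing this.

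First I would record the exact length of the balanced sequence associated to $\{D_j\} \cup \{d_k\}$. The index $j$ ranges over $-p < j < 0$ with $j \equiv p \Mod{2}$, and $k$ ranges over $0 < k < p$ with $k \equiv p \Mod{2}$; counting these shows that there are exactly $m = \lceil \frac{p}{2} \rceil - 1$ admissible values of each type. Hence the associated sequence $(a_n) \in \{\pm 1\}^{2m}$ contains precisely $m$ entries equal to $+1$ (one for each $D_j$) and $m$ entries equal to $-1$ (one for each $d_k$), consistent with its being balanced.

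Next I would bound $M(0)$ directly from the structure of a balanced sequence. Since the sum over a full period of length $2m$ is zero, summing over more than one period contributes nothing, so the supremum defining $M(0)$ is achieved on a block of at most $2m$ consecutive (cyclically read) terms. Any such partial sum equals the number of $+1$'s it contains minus the number of $-1$'s, and is therefore bounded above by the total number of $+1$'s in the entire sequence, namely $m$. This yields $M(T(p,q)) = M(0) \leq m$.

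Finally I would assemble the pieces. From $M(T(p,q)) \leq m$ we get $2 M(T(p,q)) \leq 2m = 2\left(\lceil \tfrac{p}{2} \rceil - 1\right)$, which equals $p-2$ for $p$ even and $p-1$ for $p$ odd, so in both cases $2m \leq p-1$. Substituting into the identity from Lemma \ref{lem:max_signature_based_on_sequence} gives $\widehat{\sigma}(T(p,q)) \leq \sigma(T(p,q)) + (p-1)$, as claimed. I do not anticipate a serious obstacle; the only steps demanding care are the parity-dependent count establishing that the sequence has length $2m$, and the observation — crucially relying on balancedness — that a cyclic partial sum gains nothing by wrapping around more than once.
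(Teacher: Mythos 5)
Your proposal is correct and follows essentially the same route as the paper: both reduce the claim via Lemma \ref{lem:max_signature_based_on_sequence} to bounding $M(T(p,q))$ by the number of $+1$'s in the balanced sequence, namely $\lceil \frac{p}{2}\rceil - 1$, and then observe $2(\lceil \frac{p}{2}\rceil - 1) \leq p-1$. Your version merely spells out the counting of indices and the wrap-around argument in more detail than the paper does.
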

\begin{proof}
	The sequence associated to the numbers $\{D_j\} \cup \{d_k \}$ for $T(p,q)$ contains at most $\lceil \frac{p}{2}\rceil - 1$ plus ones, and thus $2 M(T(p,q)) \leq 2 (\lceil \frac{p}{2}\rceil - 1) \leq p-1$.
\end{proof}

In fact, the proof shows that the upper bound is $\widehat{\sigma}(T(p,q)) \leq \sigma(T(p,q)) + p - 2$ for even $p$. These bounds are sharp, as for the $T(p,2p+1)$ torus knot $D_j < p < d_i$ for all $i,j$, and so $M(T(p,2p+1))$ attains the upper bound.

Recall that in the proof of Lemma \ref{lem:d_j_are_distinct_integers} we showed that for the classical signature, the number of points in $T_j \cap \Sigma^-$ is given by $|T_j \cap \Sigma^-| = \lfloor \frac{jq}{2p} \rfloor$.

\begin{lemma}
	Let $0 < p < q$ and let $T(p,q)$ be a torus knot. The signature of $T(p,q)$ is
	\[
		\sigma(T(p,q)) = (p-1)(q-1) - 4 \sum_{\substack{0 < j < p\\j \equiv p \Mod{2}}} \left \lfloor \frac{jq}{2p} \right \rfloor.
	\]
	\label{lem:signature_as_lower_left_count}
\end{lemma}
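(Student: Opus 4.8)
The plan is to specialize the lattice-point description of $\sigma_t$ to the value $t=\tfrac12$, which governs the classical signature $\sigma(T(p,q))=\sigma_{-1}(T(p,q))=\sigma_{1/2}(T(p,q))$, and then to count the points of $\Sigma^-$ column by column. By the Proposition recording that $\sigma_t=2\sigma_{\xi^+}-(p-1)(q-1)$, and using that $\Sigma$ has exactly $(p-1)(q-1)$ points, I would first rewrite this as $\sigma_{1/2}(T(p,q))=(p-1)(q-1)-2\lvert\Sigma^-\rvert$, valid as soon as $\Sigma=\Sigma^+\sqcup\Sigma^-$, i.e. as soon as no lattice point lies on the boundary $L\cup U$ of $A_{1/2}$. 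So the whole computation reduces to evaluating $\lvert\Sigma^-\rvert$.

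First I would dispose of the boundary points: a point of $\Sigma$ on $L$ satisfies $\tfrac ip+\tfrac kq=\tfrac12$, hence $2iq=p(q-2k)$, so $(p,q)=1$ forces $p\mid 2i$, which is incompatible with $0<i<p$ (for even $p$ one checks that the only surviving case $i=p/2$ would give $k=0$). The line $U$ is handled identically via the central symmetry. This secures the clean partition and the identity $\sigma_{1/2}(T(p,q))=(p-1)(q-1)-2\lvert\Sigma^-\rvert$, leaving only the column count $\lvert\Sigma^-\rvert=\sum_{j}\lvert T_j\cap\Sigma^-\rvert$.

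Next I would evaluate each column. By the count recorded just before the statement and established inside the proof of Lemma~\ref{lem:d_j_are_distinct_integers}, a column $T_j$ with $-p<j<0$ contributes $\lvert T_j\cap\Sigma^-\rvert=\lfloor -jq/2p\rfloor$ points, all lying below $L$. The central symmetry $(x,y)\mapsto(-x,1-y)$ of $\Sigma$ exchanges $L$ and $U$, preserves $\Sigma^-$, and sends $T_j$ to $T_{-j}$, so a column $T_j$ with $0<j<p$ contributes the matching count $\lfloor jq/2p\rfloor$, now of points above $U$. For even $p$ the central column $T_0$ lies entirely inside $A_{1/2}$ and contributes nothing. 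Summing over all $j$ and re-indexing the negative columns by $j\mapsto -j$ then yields $\lvert\Sigma^-\rvert=2\sum_{0<j<p,\ j\equiv p\,(2)}\lfloor jq/2p\rfloor$.

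Substituting this into $\sigma_{1/2}(T(p,q))=(p-1)(q-1)-2\lvert\Sigma^-\rvert$ produces the factor $4$ and gives precisely the claimed formula. I expect the only genuine care to lie in the symmetry bookkeeping, namely matching the index conventions for $T_j$ with $j\gtrless 0$, confirming that the even-$p$ column $T_0$ drops out, and the short coprimality argument ruling out lattice points on $L\cup U$; once these are in place the conclusion is a direct substitution.
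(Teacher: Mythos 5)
Your proposal is correct and follows essentially the same route as the paper: combine the identity $\sigma_{1/2}=(p-1)(q-1)-2\lvert\Sigma^-\rvert$ coming from Proposition \ref{prop:litherland_signature} with the per-column count $\lvert T_j\cap\Sigma^-\rvert=\lfloor -jq/2p\rfloor$ established in the proof of Lemma \ref{lem:d_j_are_distinct_integers} and the symmetry $\lvert T_j\cap\Sigma^-\rvert=\lvert T_{-j}\cap\Sigma^-\rvert$. You additionally spell out the coprimality argument that no lattice point lies on $L\cup U$, a detail the paper leaves implicit; this is a welcome but not substantively different addition.
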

\begin{proof}
	Note that by symmetry, $|T_j \cap \Sigma^-| = |T_{-j} \cap \Sigma^-|$, and therefore \\$\xi^{-} = |\Sigma^-| = 2 \sum_{j > 0} |T_j \cap \Sigma^-|$. The claim now follows from Proposition \ref{prop:litherland_signature}.
\end{proof}

We are now ready to establish Proposition \ref{prop:mainthm}. To begin, we prove a periodicity for the classical signature for $T(p,q)$ for even $p$.

\begin{proposition}
	Let $0 < p < q$, $p$ even, and let $T(p,q)$ be a torus knot. Then the signature satisfies
	\[
		\sigma(T(p,p+q)) = \sigma(T(p,q)) + \frac{p^2}{2}.
	\]
	\label{prop:periodicity_for_even_p}
\end{proposition}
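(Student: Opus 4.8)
The plan is to feed the explicit formula from Lemma~\ref{lem:signature_as_lower_left_count} into the desired identity and compare the two closed forms directly. Abbreviate the lattice sum as $S(q) = \sum_{0 < j < p,\, j \equiv p \Mod{2}} \lfloor \tfrac{jq}{2p} \rfloor$, so that the lemma reads $\sigma(T(p,q)) = (p-1)(q-1) - 4S(q)$. The difference $\sigma(T(p,p+q)) - \sigma(T(p,q))$ then splits into a polynomial part coming from the $(p-1)(q-1)$ term and a lattice-count part coming from $S$. The polynomial part is immediate, namely $(p-1)(p+q-1) - (p-1)(q-1) = p(p-1) = p^2 - p$, so the whole task reduces to evaluating $S(p+q) - S(q)$.

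For the lattice part I would use the splitting $\tfrac{j(p+q)}{2p} = \tfrac{j}{2} + \tfrac{jq}{2p}$. The decisive point — and the only place the hypothesis enters — is that the summation index obeys $j \equiv p \Mod{2}$, so $p$ even forces $j$ even and hence $\tfrac{j}{2} \in \mathbb{Z}$. An integer pulls out of the floor function, yielding term by term $\lfloor \tfrac{j(p+q)}{2p} \rfloor - \lfloor \tfrac{jq}{2p} \rfloor = \tfrac{j}{2}$. Summing over the even indices $j = 2, 4, \dots, p-2$ collapses $S(p+q) - S(q)$ into the arithmetic series $\sum_{k=1}^{p/2 - 1} k = \tfrac{1}{8}\,p(p-2)$, so $4\bigl(S(p+q) - S(q)\bigr) = \tfrac{1}{2}(p^2 - 2p)$.

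Assembling the two contributions gives $\sigma(T(p,p+q)) - \sigma(T(p,q)) = (p^2 - p) - \tfrac{1}{2}(p^2 - 2p) = \tfrac{p^2}{2}$, which is the claim.

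There is no serious analytic obstacle here; the difficulty is entirely conceptual and lies in recognizing that the parity hypothesis is precisely what allows $\tfrac{j}{2}$ to be extracted from the floor. For odd $p$ the index $j$ is odd, $\tfrac{j}{2}$ is a half-integer, and the floor difference $\lfloor \tfrac{j}{2} + \tfrac{jq}{2p} \rfloor - \lfloor \tfrac{jq}{2p} \rfloor$ is no longer the constant $\tfrac{j}{2}$ but fluctuates with the fractional part $\{\tfrac{jq}{2p}\}$. This is exactly why the analogous reduction from $q$ to $q+p$ fails for the classical signature when $p$ is odd, and I would flag this as the reason the even case admits the sharper period $p$ rather than the period $2p$ of Fact~\ref{eq:murasugi_recursive_signature}.
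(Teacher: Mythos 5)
Your proposal is correct and follows exactly the paper's own argument: apply Lemma \ref{lem:signature_as_lower_left_count}, split $\frac{j(p+q)}{2p} = \frac{j}{2} + \frac{jq}{2p}$, use that $j$ is even (since $j \equiv p \Mod{2}$ and $p$ is even) to pull $\frac{j}{2}$ out of the floor, and sum the resulting arithmetic series. The arithmetic checks out and your closing remark on why the argument breaks for odd $p$ matches the paper's framing of Proposition \ref{prop:periodicity_for_even_p} as a sharpening available only in the even case.
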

\begin{proof}
	Using Lemma \ref{lem:signature_as_lower_left_count} we calculate (omitting the index set),
	\begin{align*}
		\sigma(T(p,p+q))	&= (p-1)(p+q-1) - 4 \sum \left \lfloor \frac{j(p+q)}{2p} \right \rfloor \\
							&= (p-1)(q-1) + (p-1)p - 4 \sum \left ( \left \lfloor \frac{jq}{2p} \right \rfloor + \frac{j}{2} \right ) \\
							&= \sigma(T(p,q)) + (p-1)p - 4 \sum_{i=1}^{\frac{p}{2}-1} i \\
							&= \sigma(T(p,q)) + \frac{p^2}{2}.
	\end{align*}
\end{proof}

\begin{proof}[Proof of Proposition \ref{prop:mainthm}, for even $p$]
	Let $D'_j, d'_k$ be the distances \ref{eq:distances} for $T(p,p+q)$, and $D_j, d_k$ for $T(p,q)$. Recall that $$D'_j = -j(p+q) \equiv -jq = D_j \mod{2p},$$ as $j$ is even. Thus $M(T(p,p+q)) = M(T(p,q))$ as the underlying balanced sequences coincide, and the claim follows from Lemma \ref{lem:max_signature_based_on_sequence} and Proposition \ref{prop:periodicity_for_even_p}.
\end{proof}

\begin{proof}[Proof of Proposition \ref{prop:mainthm}, for odd $p$]
	Let $D'_{-j}, d'_k$ be the distances \ref{eq:distances} for $T(p,p+q)$, and $D_{-j}, d_k$ for $T(p,q)$. As $j$ is odd and positive we have
	\begin{align*}
		D'_{-j} &= j(p+q) \equiv p + jq = p + D_{-j} \mod 2p, \\
		d'_k &= 2p -D'_{-k} \equiv -p - D_{-k} \equiv d_k + p \mod 2p.
	\end{align*}
	The numbers $D_{-j}, d_k$ are distinct by Lemma \ref{lem:d_j_are_distinct_integers} and satisfy $D_j + d_{-j} = 2p$, and thus $\frac{p-1}{2}$ of the numbers $D_j$, $d_k$ are greater than $p$. This shows that the balanced sequence for $\{D'_j\} \cup \{d'_k \}$ is obtained from the sequence associated to $\{D_j\} \cup \{d_k \}$ by a cylical shift of $\frac{p-1}{2}$. Lemma \ref{lem:max_partial_sum_balanced_sequence} then implies that
	\begin{align*}
		M(T(p,p+q)) &= M\left (\frac{p-1}{2} \right ) \\
					&= M(T(p,q)) - \sum_{i=1}^\frac{p-1}{2} a_i \\
					&= M(T(p,q)) - \underbrace{|\{D_{-j}~|~D_{-j} < p \}|}_{\mbox{\# +1 }} + \underbrace{\left ( \frac{p-1}{2}-|\{D_{-j}~|~D_{-j} < p\}| \right)}_{\mbox{\# -1}} \\
					&= M(T(p,q)) - 2 |\{D_{-j}~|~D_{-j} < p \}| + \frac{p-1}{2}.
	\end{align*}
	The classical signature $\sigma(T(p,p+q))$ can be calculated using Lemma \ref{lem:signature_as_lower_left_count}, as follows.
	\begin{align*}
		\sigma(T(p,p+q))	&= (p-1)(p+q-1) - 4 \sum \left \lfloor \frac{j(p+q)}{2p} \right \rfloor \\
							&= (p-1)(p+q-1) - 4 \sum \left (\left \lfloor \frac{jq+p}{2p} \right \rfloor + \frac{j-1}{2} \right).
	\end{align*}
	If $D_{-j} < p$ then $\left \lfloor \frac{jq+p}{2p} \right \rfloor = \left \lfloor \frac{jq}{2p} \right \rfloor$ and otherwise $\left \lfloor \frac{jq+p}{2p} \right \rfloor = \left \lfloor \frac{jq}{2p} \right \rfloor + 1$. A straightforward calculation then shows that
	\begin{align*}
		\sigma(T(p,p+q)) = \sigma(T(p,q)) - 4 |\{D_{-j}~|~D_{-j} > p \}| + \frac{(p-1)(p+3)}{2}.
	\end{align*}
	To complete, we note that $|\{D_{-j}~|~D_{-j} < p \}| + |\{D_{-j}~|~D_{-j} > p \}| = \frac{p-1}{2}$ and appeal to Lemma \ref{lem:max_signature_based_on_sequence}.
	\begin{align*}
		\widehat{\sigma}(T(p,p+q))	&= \sigma(T(p,p+q)) + 2 M(T(p,p+q)) \\
								&= \sigma(T(p,q)) - 4 |\{D_{-j}~|~D_{-j} > p \}| + \frac{(p-1)(p+3)}{2}\\
								&\quad + 2M(T(p,q)) - 4 |\{D_{-j}~|~D_{-j} < p \}| + p-1\\
								&= \widehat{\sigma}(T(p,q)) + \frac{p^2-1}{2}.
	\end{align*}
\end{proof}

\section{Applications and Examples}
In this short section we study some specific classes of torus knots and prove that the difference $|\widehat{\sigma}(K) - \sigma(K)|$ can be arbitrarily large.
\begin{theorem}
	Let $p > 0$.
	\begin{enumerate}
	\item 
	\label{prop:p_p_plus1}
	$
	\widehat{\sigma}(T(p,p+1)) = \sigma(T(p,p+1)) + \begin{cases}
	p-2, & \text{for p even}\\
	0, & \text{for p odd}
	\end{cases},
	$
	\item
	\label{prop:p_2p_plus_1}
	$\widehat{\sigma}(T(p,2p+1)) = p^2 + p - 2$.
	\end{enumerate}
\end{theorem}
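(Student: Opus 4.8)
The plan is to read off both identities from the two structural results of the previous section: the balanced-sequence formula for the maximum signature, $\widehat{\sigma}(T(p,q)) = \sigma(T(p,q)) + 2M(T(p,q))$ (Lemma \ref{lem:max_signature_based_on_sequence}), together with the closed form $\sigma(T(p,q)) = (p-1)(q-1) - 4\sum \lfloor jq/2p\rfloor$ (Lemma \ref{lem:signature_as_lower_left_count}). In both families the second coordinate is small enough that the residues $D_j \equiv -jq \pmod{2p}$ can be written down explicitly; this determines the ordering of the $D_j$ and $d_k$, hence the associated balanced sequence and the number $M$. The only genuine content is therefore to (a) identify the balanced sequence and compute its maximal cyclic partial sum, and (b) evaluate the floor sum defining the classical signature.

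I would begin with part (2), since much of the combinatorics is already recorded in the remark following the Corollary: for $q = 2p+1$ one has $D_j \equiv -j \pmod{2p}$, so $D_j = -j < p$ for every admissible $j$, while $d_k = 2p - D_{-k} = 2p - k > p$. Arranged in increasing order all the $+1$'s precede all the $-1$'s, so the sequence is $(\underbrace{+1,\dots,+1}_{m},\underbrace{-1,\dots,-1}_{m})$ with $m = \lceil p/2\rceil - 1$, whence $M(T(p,2p+1)) = m$. It then remains to evaluate the signature: since $\lfloor j(2p+1)/2p\rfloor = j$ for $0 < j < p$, Lemma \ref{lem:signature_as_lower_left_count} reduces to a sum of the admissible residues $j$, giving $\sigma(T(p,2p+1)) = p^2$ for $p$ even and $p^2 - 1$ for $p$ odd. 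Adding $2M$, which equals $p-2$ (even) and $p-1$ (odd), collapses both cases to $p^2 + p - 2$.

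For part (1) the extra work is to compute the distances for $q = p+1$, where $D_j \equiv -j(p+1) \pmod{2p}$ and the term $-jp \bmod 2p$ is governed by the parity of $j$ (which equals that of $p$). For $p$ even, $j$ is even, $-jp \equiv 0$, so $D_{-2a} = 2a$ runs through $2,4,\dots,p-2$, all below $p$, with complementary $d_{2a} = 2p - 2a$ above $p$; the sequence is again a block of $+1$'s followed by a block of $-1$'s, so $M = p/2 - 1$ and $\widehat{\sigma} = \sigma + (p-2)$. For $p$ odd, $j$ is odd, $-jp \equiv p$, so the $D_j$ now occupy $p+1, p+3, \dots, 2p-2$ (all above $p$) and the $d_k$ occupy $2,4,\dots,p-1$ (all below $p$); the sequence is a block of $-1$'s followed by a block of $+1$'s, whose maximal cyclic partial sum starting at index $0$ is $0$ (the empty sum already attains it, and by balance no wrap-around improves on it), so $M = 0$ and $\widehat{\sigma} = \sigma(T(p,p+1))$, matching the stated values.

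The bookkeeping is the only delicate point, and I expect the odd case of part (1) to be the main trap. There one must track $-jp \bmod 2p$ correctly and recognize that, although there are $(p-1)/2$ plus-ones, their placement \emph{after} all the minus-ones forces $M(0) = 0$ rather than the count of $+1$'s; this is precisely the reverse of the sharp configuration $D_j < p < d_k$ exploited in part (2), so the Corollary's upper bound on $M$ is here replaced by the trivial lower bound. As a safeguard against off-by-one errors in the residue computation and the floor sums, I would check the formulas against the degenerate small cases $p = 1, 2$ before finalizing.
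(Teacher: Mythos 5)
Your proposal is correct and follows essentially the same route as the paper: use the residues $D_j \equiv -jq \Mod{2p}$ to write down the ordering of the $D_j$ and $d_k$ explicitly in each case, read off the balanced sequence and $M$, and conclude via Lemma \ref{lem:max_signature_based_on_sequence}. The only inessential difference is that for part (2) you evaluate $\sigma(T(p,2p+1))$ directly from the floor-sum formula of Lemma \ref{lem:signature_as_lower_left_count}, whereas the paper cites the Gordon--Litherland--Murasugi recursion (Fact \ref{eq:murasugi_recursive_signature}); both yield $p^2$ (resp.\ $p^2-1$), and your value $M=0$ in the odd case of part (1) is attained by the full-period sum rather than an empty sum, a harmless rephrasing.
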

\begin{proof}
	For Equation \ref{prop:p_p_plus1}, let $p$ be even. A calculation shows that $D_{-j} \equiv j \Mod{2p}$ and thus the ordering of the numbers $\{D_j\}\cup \{d_k\}$ is $$D_{-2} < \dots < D_{-p+2} < d_2 < \dots < d_{p-2},$$ implying that $M(T(p,p+1)) = \frac{p-2}{2}$. For $p$ odd, $D_{-j} \equiv p + j \Mod{2p}$ and the ordering of the numbers $\{D_j\}\cup \{d_k\}$ in this case is $$d_2 < \dots < d_{p-2} < D_{-2} < \dots < D_{-p+2}.$$ The claim now follows from Lemma \ref{lem:max_signature_based_on_sequence}.

	For Equation \ref{prop:p_2p_plus_1}, a straightforward calculation of the numbers $\{D_j\}\cup \{d_k\}$ and Lemma \ref{lem:max_signature_based_on_sequence} gives 
	$$
	\widehat{\sigma}(T(p,2p+1)) = \sigma(T(p,2p+1)) + \begin{cases}
		p-2, & \text{for p even}\\
		p-1, & \text{for p odd}
	\end{cases},
	$$
	and by Fact \ref{eq:murasugi_recursive_signature}, 
	$$
	\sigma(T(p,2p+1)) = \sigma(T(p,1)) + \begin{cases}
		p^2, & \text{for p even}\\
		p^2-1, & \text{for p odd}
	\end{cases}.
	$$
	As the signature of the unknot is zero, $\widehat{\sigma}(T(p,2p+1)) = p^2 + p - 2$ now follows.
\end{proof}
\begin{cor}
	The difference $|\widehat{\sigma}(K) - \sigma(K)|$ can be arbitrarily large.
\end{cor}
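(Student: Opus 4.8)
The plan is to exhibit an explicit infinite family of torus knots for which the gap $\widehat{\sigma} - \sigma$ grows without bound. The natural candidates are the knots $T(p,2p+1)$ already analyzed in part \ref{prop:p_2p_plus_1} of the preceding Theorem, since the argument there records, as an intermediate step, the exact difference between the maximum signature and the classical signature before combining it with Fact \ref{eq:murasugi_recursive_signature} to obtain the closed form $\widehat{\sigma}(T(p,2p+1)) = p^2 + p - 2$.

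First I would quote the intermediate identity established in that proof, namely
\[
	\widehat{\sigma}(T(p,2p+1)) = \sigma(T(p,2p+1)) + \begin{cases} p-2, & p \text{ even} \\ p-1, & p \text{ odd}, \end{cases}
\]
from which it follows immediately that
\[
	|\widehat{\sigma}(T(p,2p+1)) - \sigma(T(p,2p+1))| \geq p - 2.
\]
Letting $p$ range over all positive integers then produces a sequence of torus knots along which this difference tends to infinity, which is exactly the assertion of the corollary. Equivalently, one may bypass the intermediate formula and argue directly from the final values: by Fact \ref{eq:murasugi_recursive_signature} together with the vanishing of the signature of the unknot, $\sigma(T(p,2p+1))$ equals $p^2$ for even $p$ and $p^2-1$ for odd $p$, so subtracting this from $\widehat{\sigma}(T(p,2p+1)) = p^2 + p - 2$ recovers the same linear-in-$p$ gap.

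I do not anticipate any genuine obstacle, as the statement is a direct consequence of computations already carried out. The only points requiring minor care are to quote the correct parity branch of the intermediate formula and to confirm that the two routes (via the intermediate identity, or via the closed forms of $\widehat{\sigma}$ and $\sigma$ separately) agree; both give a difference that grows linearly in $p$, and since $p$ is unbounded the conclusion is immediate.
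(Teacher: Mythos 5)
Your proposal is correct and matches the paper's intent exactly: the corollary is stated without a separate proof because it follows immediately from the preceding theorem, and you invoke precisely the intermediate identity $\widehat{\sigma}(T(p,2p+1)) - \sigma(T(p,2p+1)) \in \{p-2,\, p-1\}$ (equivalently, part (1) for even $p$ gives the gap $p-2$) established there. Letting $p \to \infty$ along this family is the intended argument.
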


\section*{Acknowledgements}
The author would like to thank Sebastian Baader and Lukas Lewark for helpful discussions.

\bibliographystyle{halpha}
\bibliography{Max_LT_signature}

\end{document}